\newtheorem{Thm}{Theorem} 
\newtheorem{Lem}[Thm]{Lemma}
\newtheorem{Prop}[Thm]{Proposition}
\newtheorem{Con}[Thm]{Conjecture}
\theoremstyle{definition}
\newtheorem{Def}[Thm]{Definition}
\numberwithin{equation}{section}
\renewcommand{\phi}{\varphi}
\newcommand{\C}{\mathrm{C}}
\newcommand{\N}{\mathrm{N}}
\newcommand{\Z}{\mathrm{Z}}
\newcommand{\ZZ}{\mathbb{Z}}
\newcommand{\FF}{\mathbb{F}}
\newcommand{\Aut}{\operatorname{Aut}}
\newcommand{\Out}{\operatorname{Out}}
\newcommand{\pcore}{\mathrm{O}}
\newcommand{\GL}{\operatorname{GL}}
\newcommand{\AGL}{\operatorname{AGL}}
\newcommand{\SL}{\operatorname{SL}}
\newcommand{\PSL}{\operatorname{PSL}}
\newcommand{\F}{\mathrm{F}}
\newcommand{\Syl}{\operatorname{Syl}}
\newcommand{\id}{\operatorname{id}}
\newcommand{\diag}{\operatorname{diag}}
\mathchardef\ordinarycolon\mathcode`\:  
\title{Generalized bases of finite groups}
\author{Benjamin Sambale\footnote{Institut für Algebra, Zahlentheorie und Diskrete Mathematik, Leibniz Universität Hannover, Welfengarten 1, 30167 Hannover, Germany,
\href{mailto:sambale@math.uni-hannover.de}{sambale@math.uni-hannover.de}}}
\date{\today}
\begin{document}
\frenchspacing
\maketitle
\begin{abstract}\noindent
Motivated by recent results on the minimal base of a permutation group, we introduce a new local invariant attached to arbitrary finite groups. More precisely, a subset $\Delta$ of a finite group $G$ is called a \emph{$p$-base} (where $p$ is a prime) if $\langle\Delta\rangle$ is a $p$-group and $\C_G(\Delta)$ is $p$-nilpotent. Building on results of Halasi--Maróti, we prove that $p$-solvable groups possess $p$-bases of size $3$ for every prime $p$. For other prominent groups we exhibit $p$-bases of size $2$.
In fact, we conjecture the existence of $p$-bases of size $2$ for every finite group. Finally, the notion of $p$-bases is generalized to blocks and fusion systems. 
\end{abstract}

\textbf{Keywords:} base, $p$-nilpotent centralizer, fusion\\
\textbf{AMS classification:} 20D20, 20B05

\section{Introduction}
Many algorithms in computational group theory depend on the existence of small bases. Here, a \emph{base} of a permutation group $G$ acting on a set $\Omega$ is a subset $\Delta\subseteq\Omega$ such that the pointwise stabilizer $G_\Delta$ is trivial (i.\,e. if $g\in G$ fixes every $\delta\in\Delta$, then $g=1$). 
The aim of this short note is to introduce a generalized base without the presence of a group action. 
To this end let us first consider a finite group $G$ acting faithfully by automorphisms on a $p$-group $P$. If $p$ does not divide $|G|$, then $G$ always admits a base of size $2$ by a theorem of Halasi--Podoski~\cite{base2}. Now suppose that $G$ is $p$-solvable, $P$ is elementary abelian and $G$ acts completely reducibly on $P$. Then $G$ has a base of size $3$ ($2$ if $p\ge 5$) by Halasi--Maróti~\cite{HalasiMaroti}. In those situations we may form the semidirect product $H:=P\rtimes G$. Now there exists $\Delta\subseteq P$ such that $|\Delta|\le 3$ and $\C_H(\Delta)=\C_H(\langle\Delta\rangle)\le P$. This motivates the following definition.

\begin{Def}
Let $G$ be a finite group with Sylow $p$-subgroup $P$. A subset $\Delta\subseteq P$ is called a \emph{$p$-base} of $G$ if $\C_G(\Delta)$ is $p$-nilpotent, i.\,e. $\C_G(\Delta)$ has a normal $p$-complement.
\end{Def}

Clearly, any generating set of $P$ is a $p$-base of $G$ since $\C_G(P)=\Z(P)\times\pcore_{p'}(\C_G(P))$ (this observation is generalized in \autoref{lemsc} below). 

Our main theorem extends the work of Halasi--Maróti as follows.

\begin{Thm}\label{main}
Every $p$-solvable group has a $p$-base of size $3$ ($2$ if $p\ge 5$).
\end{Thm}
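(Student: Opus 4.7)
The strategy is to apply the Halasi--Maróti theorem to a faithful action of a quotient of $G$ on an elementary abelian $p$-group lying inside $P$.

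\emph{Step 1: reduction to $\pcore_{p'}(G)=1$.} Let $K:=\pcore_{p'}(G)$ and $\bar G:=G/K$. Since $P\cap K=1$, the Sylow $p$-subgroup $PK/K$ of $\bar G$ is canonically identified with $P$. If $\bar\Delta\subseteq P$ is a $p$-base of $\bar G$ with preimage $\Delta\subseteq P$, then $\C_G(\Delta)K/K\le\C_{\bar G}(\bar\Delta)$ is $p$-nilpotent, and $\C_G(\Delta)\cap K$ is a normal $p'$-subgroup of $\C_G(\Delta)$; a routine check (every extension of a $p'$-group by a $p$-nilpotent group is $p$-nilpotent) shows that $\C_G(\Delta)$ itself is $p$-nilpotent. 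Hence we may assume $\pcore_{p'}(G)=1$.

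\emph{Step 2: the module and its faithful action.} Set $N:=\pcore_p(G)\le P$. Since $G$ is $p$-solvable with $\pcore_{p'}(G)=1$, the Hall--Higman lemma gives $\C_G(N)\le N$. Consider the elementary abelian $p$-group $V:=N/\Phi(N)$ with the conjugation action of $\bar G:=G/N$. Any $p'$-element of $\C_G(V)$ acts trivially on $V=N/\Phi(N)$, hence by coprime action also on $N$; it therefore lies in $\C_G(N)\le N$ and is trivial. Thus $\C_G(V)$ consists only of $p$-elements and is a normal $p$-subgroup of $G$, so $\C_G(V)\le\pcore_p(G)=N$ (using $\pcore_p(G/\pcore_p(G))=1$). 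In particular $\bar G$ acts faithfully on $V$.

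\emph{Step 3: apply Halasi--Maróti and lift.} Applying the Halasi--Maróti theorem to the faithful action of the $p$-solvable group $\bar G$ on the elementary abelian $p$-group $V$ produces $\bar\delta_1,\ldots,\bar\delta_k\in V$ with $k\le 3$ (or $k\le 2$ if $p\ge 5$) and $\C_{\bar G}(\bar\delta_1,\ldots,\bar\delta_k)=1$. Lift the $\bar\delta_i$ to elements $\delta_i\in N\subseteq P$ and set $\Delta:=\{\delta_1,\ldots,\delta_k\}$. Then $\C_G(\Delta)N/N\le\C_{\bar G}(\bar\delta_1,\ldots,\bar\delta_k)=1$, so $\C_G(\Delta)\le N$; being contained in the $p$-group $N$, it is a $p$-group, hence trivially $p$-nilpotent. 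Thus $\Delta$ is a $p$-base of $G$ of the required size.

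The main difficulty I foresee is verifying the hypotheses of Halasi--Maróti in Step~3: as formulated in the introduction, that theorem additionally requires the action on $V$ to be completely reducible, but $V=N/\Phi(N)$ need not be semisimple as an $\FF_p[\bar G]$-module when $p$ divides $|\bar G|$. Completing the argument will likely require either appealing to a more general form of Halasi--Maróti that handles arbitrary faithful modular actions, or arguing inductively along the socle series of $V$ while combining partial bases in a way that stays within the size bound $3$ (respectively $2$).
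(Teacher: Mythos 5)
Your Steps 1 and 2 are correct and coincide with the opening of the paper's proof: reduce to $\pcore_{p'}(G)=1$, set $N:=\pcore_p(G)$, and use Hall--Higman to get a faithful action of $G/N$ on a Frattini quotient of $N$. The difficulty you flag at the end, however, is not a technicality but the actual mathematical content of the theorem, and neither of your two proposed repairs works. There is no version of Halasi--Maróti for arbitrary faithful modular actions: the unitriangular group $U_n(p)\le\GL(n,p)$ is a $p$-group (hence $p$-solvable) acting faithfully on $\FF_p^n$, and its minimal base size there is $n-1$, so the bound $3$ fails badly once complete reducibility is dropped. And inducting along the socle series of $V$ forces you to concatenate partial bases, so the sizes add up and you leave the bound $3$ after two layers. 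Moreover, your module $V=N/\Phi(N)$ genuinely need not be semisimple even with $\pcore_{p'}(G)=1$ and $\pcore_p(G/N)=1$ (take $N$ elementary abelian, so $\Phi(N)=1$, and let $G/N$ act via an indecomposable non-simple faithful module), so the gap cannot be waved away.

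The paper closes the gap by a different pair of reductions. First it inducts on $|N|$: by Hall--Higman, $\pcore_{p'}(G/\Phi(N))=1$, so one may pass to $G/\Phi(N)$ and assume $N$ is elementary abelian; the statement proved inductively is the stronger one that some $\Delta\subseteq N$ of the right size has $\C_G(\Delta)\le N$, which lifts through $\Phi(N)$. Second --- and this is the step your proposal is missing --- it replaces $G$ by $N\rtimes H$ and passes to $N/\Phi(G)$, the Frattini subgroup of the \emph{whole group}, not of $N$. It shows $H$ still acts faithfully there: if $K$ is the kernel, then $\pcore_p(K)\le\pcore_p(H)=1$ forces $K_0:=\pcore_{p'}(K)\ne 1$, and the coprime decomposition $N=[K_0,N]\C_N(K_0)=\Phi(G)\C_N(K_0)$ puts $G=\Phi(G)\C_N(K_0)H$ inside a maximal subgroup, a contradiction. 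Once $\Phi(G)=1$, the module $N$ embeds into $\bigoplus_i N/N_i$ with $N_i=M_i\cap N$ running over traces of maximal subgroups, each $N/N_i$ simple, so the action is completely reducible and Halasi--Maróti applies legitimately. Without this passage from $\Phi(N)$ to $\Phi(G)$ and the accompanying faithfulness argument, your Step 3 does not go through.
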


Although Halasi--Maróti's Theorem does not extend to non-$p$-solvable groups, the situation for $p$-bases seems more fortunate.
For instance, if $V$ is a finite vector space in characteristic $p$, then every base of $\GL(V)$ (under the natural action) contains a \emph{basis} of $V$, so its size is at least $\dim V$. On the other hand, $G=\AGL(V)=V\rtimes\GL(V)$ possesses a $p$-base of size $2$. 
To see this, let $P$ be the Sylow $p$-subgroup of $\GL(V)$ consisting of the upper unitriangular matrices. Let $x\in P$ be a Jordan block of size $\dim V$. Then $\C_{\GL(V)}(x)\le P\Z(\GL(V))$. For any $y\in\C_V(x)\setminus\{1\}$ we obtain a $p$-base $\Delta:=\{x,y\}$ such that $\C_G(\Delta)\le VP$. We have even found a $p$-base consisting of \emph{commuting} elements. 
After checking many more cases, we believe that the following might hold.

\begin{Con}\label{con1}
Every finite group has a (commutative) $p$-base of size $2$ for every prime $p$. 
\end{Con}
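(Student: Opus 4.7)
The plan is induction on $|G|$, combining a reduction via the generalized Fitting subgroup with a case analysis based on the Classification of Finite Simple Groups. Fix $P\in\Syl_p(G)$.

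\textbf{Inductive reductions.} If $N:=\pcore_{p'}(G)\ne 1$, apply induction to $G/N$ to obtain commuting $\bar x,\bar y\in PN/N$ with $p$-nilpotent joint centralizer. Lifting via the isomorphism $P\cong PN/N$ to commuting $x,y\in P$, the centralizer $\C_G(\{x,y\})$ lies in the preimage $H$ of $\C_{G/N}(\{\bar x,\bar y\})$, and $H$ is an extension of the $p'$-group $N$ by a $p$-nilpotent group, hence itself $p$-nilpotent; this property then passes to the subgroup $\C_G(\{x,y\})$. Thus we may assume $\pcore_{p'}(G)=1$, in which case $\C_G(\F^*(G))\le\F^*(G)=\pcore_p(G)\E(G)$. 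If $\E(G)=1$, the group is $p$-constrained and one is in the Halasi--Maróti setting underlying \autoref{main}; sharpening the bound from $3$ to $2$ for $p\in\{2,3\}$ would mean revisiting the exceptional faithful modules for small groups such as $\SL_2(3)$ acting on $\FF_3^2$ and exploiting that only $p$-nilpotency of the centralizer---not triviality---is demanded.

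\textbf{CFSG analysis.} When $\E(G)\ne 1$, one uses the conjugation action of $G$ on the components of $\E(G)$ and a compatible choice of $p$-bases on each orbit to reduce to $G$ almost simple with simple socle $S$. For $S$ of Lie type in defining characteristic $p$, a regular unipotent element $x\in P$ satisfies $\C_G(x)\le P\Z(G)$, which is already $p$-nilpotent; any $y\in\Z(P)$ then completes a commutative $p$-base of size $2$. For $S=A_n$ one takes $x$ to be a product of as many disjoint $p$-cycles as possible and $y\in\C_S(x)\cap P$ to pin down the residual symmetric factor. Sporadic simple groups and their automorphism groups are verified by direct computation from the \textsc{Atlas} or a computer algebra system.

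\textbf{Main obstacle.} The genuine difficulty is the case of simple groups of Lie type in \emph{non-defining} characteristic, where the Sylow $p$-subgroup is not unipotent but is controlled arithmetically by the $\Phi_d$-cyclotomic factorization of $|S|$, and centralizers of $p$-elements are governed by $d$-split Levi subgroups or by Sylow normalizers. Producing a commuting pair $\{x,y\}\subseteq P$ whose joint centralizer is simultaneously $p$-local and $p$-nilpotent is where the real content of the conjecture lies; a uniform treatment appears to require Deligne--Lusztig theory together with the abelian defect group framework of Broué, and the cases of small $d$ and of wildly ramified Sylow structure look the most delicate. The Halasi--Maróti refinement for $p\in\{2,3\}$ in the $p$-solvable setting is an independent, secondary obstacle.
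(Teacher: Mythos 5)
This statement is \autoref{con1}, which the paper poses as an open conjecture: it is \emph{not} proved there, and only partial evidence is given (\autoref{main} for $p$-solvable groups with the weaker bound $3$ when $p\le 3$, and \autoref{thm2} for abelian Sylow subgroups, symmetric/alternating groups, $\GL$/$\SL$/$\PSL$, and sporadic groups). Your proposal is likewise not a proof, and you say so yourself: the entire case of almost simple groups of Lie type in non-defining characteristic is left open, as is the sharpening of Halasi--Mar\'oti from base size $3$ to $2$ for $p\in\{2,3\}$ in the $p$-constrained case. These are not loose ends; they are the substance of the conjecture. For the latter point note that Halasi--Mar\'oti's bound of $3$ for $p\le 3$ is sharp for genuine base size in the linear-action setting, so any improvement must exploit that only $p$-nilpotency (not triviality) of the centralizer is required --- an idea you name but do not implement, and which the paper itself does not achieve either.

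Two of your reduction steps also need more care than you give them. The reduction modulo $\pcore_{p'}(G)$ is sound (an extension of a $p'$-group by a $p$-nilpotent group is $p$-nilpotent, and the property passes to subgroups); this matches the first step of the paper's proof of \autoref{main}. But the passage from $\E(G)\ne 1$ to ``$G$ almost simple'' is asserted without argument, and there is no off-the-shelf reduction theorem for a local statement of this kind. If $\E(G)$ has several components permuted by $G$, a pair $\{x,y\}$ built diagonally from $p$-bases of the components has a centralizer that still sees the permutation action on components and the outer automorphisms induced by $G$ on each component; controlling that centralizer requires the almost simple case in a strong, $\Aut(S)$-equivariant form and an argument that the resulting quotient is $p$-nilpotent, none of which is supplied. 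The paper sidesteps this entirely by not claiming a reduction and instead verifying individual families directly (\autoref{symalt}, \autoref{gl}, the sporadic proposition), together with \autoref{lemquot} to pass to central quotients. In short: what you have is a reasonable research program that mirrors the paper's evidence, not a proof, and the conjecture remains open.
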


The role of the number $2$ in \autoref{con1} appears somewhat arbitrary at first. There is, however, an elementary dual theorem:
A finite group is $p$-nilpotent if and only if every $2$-generated subgroup is $p$-nilpotent. This can be deduced from the structure of minimal non-$p$-nilpotent groups (see \cite[Satz~IV.5.4]{Huppert}). It is a much deeper theorem of Thompson~\cite{ThompsonAut} that the same result holds when “$p$-nilpotent” is replaced by “solvable”. Similarly, $2$-generated subgroups play a role in the Baer--Suzuki Theorem and its variations. 

Apart from \autoref{main} we give some more evidence of \autoref{con1}.

\begin{Thm}\label{thm2}
Let $G$ be a finite group with Sylow $p$-subgroup $P$. Then \autoref{con1} holds for $G$ in the following cases:
\begin{enumerate}[(i)]
\item $P$ is abelian.
\item $G$ is a symmetric group or an alternating group.
\item $G$ is a general linear group, a special linear group or a projective special linear group.
\item $G$ is a sporadic simple group or an automorphism group thereof.
\end{enumerate}
\end{Thm}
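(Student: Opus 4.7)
For part~(i), let $N:=\N_G(P)$ and $C:=\C_G(P)$. Since $P$ is abelian we have $P\le C$, and $A:=N/C$ embeds into $\Aut(P)$; moreover $|A|$ divides the $p'$-number $|N/P|$, so $A$ is a $p'$-group acting faithfully on $P$. Halasi--Podoski~\cite{base2} then yields $x,y\in P$ with $\C_A(x)\cap\C_A(y)=1$. Setting $H:=\C_G(x,y)$, we have $P\le H$ (so $P$ is Sylow in $H$) and $\N_H(P)=\C_N(x,y)=C=\C_H(P)$. Burnside's normal $p$-complement theorem forces $H$ to be $p$-nilpotent, and the base $\{x,y\}$ commutes because $P$ is abelian.

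For part~(ii), let $G=S_n$, write $n=\sum_i a_ip^i$ in base $p$, and take $x\in P$ to be a product, for each $i\ge 1$, of $a_i$ disjoint $p^i$-cycles placed inside the standard Sylow. Then
\[
\C_{S_n}(x)\cong S_{a_0}\times\prod_{i\ge 1}\bigl(C_{p^i}\wr S_{a_i}\bigr),
\]
which fails to be $p$-nilpotent whenever some $a_i\ge 2$. To remedy this I adjoin $y\in P$ whose restriction to the $a_i$ cycles of length $p^i$ is a tuple in $C_{p^i}^{a_i}$ with pairwise distinct coordinates (possible since $a_i<p\le p^i$). Then $\C_{S_n}(x,y)$ embeds into the $p$-nilpotent group $S_{a_0}\times\prod_{i\ge 1}C_{p^i}^{a_i}$, and $x,y$ commute since both lie in the abelian group $\prod_i C_{p^i}^{a_i}$. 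For $G=A_n$ and odd $p$ the same element works; for $p=2$ one may need to correct the parity of $x$, which I do by splitting one $2^k$-cycle into two $2^{k-1}$-cycles inside the same Sylow (picking $k$ maximal so the new block admits the same distinct-coordinate argument) and, if necessary, absorbing the resulting $S_{a'_i}$-symmetry with $y$ as before.

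For part~(iii), assume first $p=\operatorname{char}(\FF_q)$; a single Jordan block $J=I+N$ of size $n$ suffices because
\[
\C_{\GL_n(\FF_q)}(J)=\FF_q[N]^\times=(\FF_q^\times\cdot I)\times(I+N\FF_q[N])
\]
is a direct product of a $p'$-group and a $p$-group, and this descends to $\SL_n$ (since $\det J=1$) and to $\PSL_n$ (since $\zeta J\sim J$ forces $\zeta=1$ by equality of eigenvalues). If $p\nmid q$, set $d:=\operatorname{ord}_p(q)$; the case $d>n$ is vacuous, so assume $d\le n$ and write $n=md+r$ with $0\le r<d$. Embed $\GL_d(\FF_q)^m\times\GL_r(\FF_q)\hookrightarrow\GL_n(\FF_q)$ block-diagonally and take $x\in P$ to act block-by-block as $p$-elements of Singer cycles $\FF_{q^d}^\times$, chosen as distinct as the Sylow structure allows; the centralizer becomes a product of maximal tori with the $p'$-group $\GL_r(\FF_q)$, and any residual $S_m$-like symmetry on equal blocks is killed by a second commuting $y\in P$ exactly as in part~(ii). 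The step to $\SL_n$ and $\PSL_n$ is routine.

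For part~(iv), I verify \autoref{con1} for the $26$ sporadic simple groups and their (at most index $2$) automorphism groups by direct computation in GAP: for every prime $p$ dividing $|G|$, pick from the character-table library a $p$-element $x$ with small centralizer, and if $\C_G(x)$ is not already $p$-nilpotent, search the $p$-elements of $\C_G(x)$ for a commuting $y$ making $\C_G(x,y)$ $p$-nilpotent. The main technical obstacle I anticipate lies in part~(iii) for $p\nmid q$, where the Sylow $p$-subgroup of $\GL_n(\FF_q)$ depends delicately on $\operatorname{ord}_p(q)$ and the $p$-adic valuations of cyclotomic factors of $|\GL_n(\FF_q)|$; the parity fix for $A_n$ with $p=2$ in part~(ii) is a secondary but routine hurdle.
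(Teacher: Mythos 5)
Your parts (i) and (ii) are correct and essentially coincide with the paper's arguments: for abelian $P$ one applies Halasi--Podoski to the $p'$-group $\N_G(P)/\C_G(P)$ and finishes with Burnside's transfer theorem (the paper's \autoref{abel} does exactly this in the case $c=1$), and for $S_n$ and $A_n$ your element $x$ and the ``pairwise distinct coordinates'' trick for $y$ are the same as in \autoref{symalt}, including the parity repair at $p=2$ by splitting a cycle (the paper splits the \emph{smallest} nontrivial cycle rather than the largest, which avoids your caveat that $C_{2^{k-1}}$ may not contain enough distinct elements; but the residual symmetry is a $2$-group in either case). The genuine gap is in part (iii) for $p\nmid q$. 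You claim that if $x$ is block-diagonal with blocks taken from $p$-elements of Singer cycles of $\GL_e(q)$ (your $d=e=\operatorname{ord}_p(q\bmod p)$), then $\C_G(x)$ is a product of maximal tori with only a residual ``$S_m$-like'' symmetry on equal blocks. This is false: if two diagonal blocks equal the companion matrix of the same irreducible polynomial $\gamma$ of degree $e$, then $\FF_q^{2e}$ is a free rank-$2$ module over $\FF_q[X]/(\gamma)\cong\FF_{q^e}$ and the centralizer of that pair is $\GL_2(\FF_{q^e})$, not $(\FF_{q^e}^\times)^2\rtimes S_2$. Since the $p$-part of $\FF_{q^e}^\times$ has order $p^s$, only $(p^s-1)/e$ distinct nontrivial block types of degree $e$ exist, so for $m=\lfloor n/e\rfloor$ large repetitions are forced and $\C_G(x)$ contains factors $\GL_j(\FF_{q^e})$ with $j\ge 2$; these are never $p$-nilpotent (they contain a perfect subgroup of order divisible by $p$), and they cannot be ``killed exactly as in part (ii)'': you would have to solve the original problem again inside $\GL_j(\FF_{q^e})$, possibly needing a third element. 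The paper avoids this by letting the block sizes grow with the $p$-adic expansion $\lfloor n/e\rfloor=\sum a_{i+1}p^i$: it takes $a_{i+1}<p$ copies of the companion matrix $M_i$ of a degree-$ep^i$ factor $\gamma_{i,1}$ of $X^{p^{s+i}}-1$ (\autoref{cyclo}), so each block is a $p$-element of order $p^{s+i}$ whose minimal polynomial equals its characteristic polynomial; then \autoref{cent} identifies $\C(x_i)$ with matrices whose blocks are polynomials in $M_i$, and $y_i=\diag(M_i,M_i^2,\dots,M_i^{a_{i+1}})$ forces the off-diagonal blocks to vanish because $\gamma_{i,1}\nmid X^{l-k}-1$ for $0<l-k<p$. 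Your ``routine'' step to $\SL_n(q)$ also hides real work when $p\mid q-1$: fixing the determinant by altering a $1\times1$ block can create a repeated eigenvalue, which the paper must correct with a specific second element.

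Part (iv) as proposed is a plan rather than a proof, and it is not executable as stated for the largest sporadic groups: for $Ly$, $Th$ and above all the Monster there is no available permutation or matrix representation in which GAP can compute $\C_G(x,y)$, and the character table alone does not decide whether a centralizer is $p$-nilpotent, let alone let you search $\C_G(x)$ for a suitable $y$. The paper therefore falls back on structural data from the Atlas ($\C_G(x)=2.A_{11}$, $3.McL$, $2^{1+8}_+.A_9$, $C_5\times HN$) combined with the already-established cases, and for the Monster at $p=7$ it needs Wilson's classification of odd-local subgroups to produce a self-centralizing $C_7\times C_7$, after which \autoref{lemsc} applies. You should either restrict the computational claim to groups with a feasible permutation representation or supply these structural reductions explicitly.
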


Our results on (almost) simple groups carry over to the corresponding quasisimple groups by \autoref{lemquot} below.
The notion of $p$-bases generalizes to blocks of finite groups and even to fusion systems. 

\begin{Def}\hfill
\begin{itemize}
\item Let $B$ be a $p$-block of a finite group $G$ with defect group $D$. A subset $\Delta\subseteq D$ is called \emph{base} of $B$ if $B$ has a nilpotent Brauer correspondent in $\C_G(\Delta)$ (see \cite[Definition~IV.5.38]{AKO}). 

\item Let $\mathcal{F}$ be a saturated fusion system on a finite $p$-group $P$. A subset $\Delta\subseteq P$ is called \emph{base} of $\mathcal{F}$ if there exists a morphism $\phi$ in $\mathcal{F}$ such that $\phi(\langle\Delta\rangle)$ is fully $\mathcal{F}$-centralized and the centralizer fusion system $\mathcal{C}:=\C_{\mathcal{F}}(\phi(\langle\Delta\rangle))$ is trivial, i.\,e. $\mathcal{C}=\mathcal{F}_{\C_P(\Delta)}(\C_P(\Delta))$ (see \cite[Definition~I.5.3, Theorem~I.5.5]{AKO}). 
\end{itemize}
\end{Def}

By Brauer's third main theorem, the bases of the principal $p$-block of $G$ are the $p$-bases of $G$ (see \cite[Theorem~IV.5.9]{AKO}). Moreover, if $\mathcal{F}$ is the fusion system attached to an arbitrary block $B$, then the bases of $B$ are the bases of $\mathcal{F}$ (see \cite[Theorem~IV.3.19]{AKO}). By the existence of exotic fusion systems, the following conjecture strengthens \autoref{con1}.

\begin{Con}\label{conbase}
Every saturated fusion system has a base of size $2$. 
\end{Con}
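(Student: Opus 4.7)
The plan is to split the problem into the two natural cases suggested by the author's remark that \autoref{conbase} is genuinely stronger than \autoref{con1}: saturated fusion systems realized by a finite group and exotic ones.

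For realizable fusion systems $\mathcal{F}=\mathcal{F}_P(G)$, a base of $\mathcal{F}$ coincides with a $p$-base of $G$ via Brauer's third main theorem (for the principal block) and via the block-to-fusion correspondence of \cite[Theorem~IV.3.19]{AKO} in general. I would therefore first try to establish \autoref{con1} in full generality. Building on \autoref{main} and \autoref{thm2}, and using \autoref{lemquot} to pass between (quasi)simple and almost simple groups, one is reduced to handling the remaining groups of Lie type. For those, a natural attempt is to choose $x\in P$ to be a regular unipotent element, so that $\C_G(x)$ lies inside a Sylow-normalizer-like subgroup, and then pick $y\in\C_P(x)$ inside a long-root subgroup; the resulting centralizer $\C_G(\{x,y\})$ should end up inside a Borel and hence be $p$-nilpotent.

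For exotic fusion systems, one has no recourse to a group, so one has to proceed case by case through the known families: the Solomon systems at $p=2$, the Ruiz--Viruel systems on $p^{1+2}_+$ at $p=7$, the exotic systems of Oliver, and the Parker--Stroth families at odd primes. In each family the Sylow $p$-subgroup has an explicit presentation and only a handful of $\mathcal{F}$-essential subgroups. I would try $\Delta=\{z,y\}$ with $z\in\Z(P)$ and $y$ chosen generically inside $\C_P(z)$, then verify via Alperin's fusion theorem that $\Aut_\mathcal{F}(R)$ fixes $\langle\Delta\rangle$ pointwise (modulo $\Inn(R)$) for every $\mathcal{F}$-essential $R$ containing a fully $\mathcal{F}$-centralized conjugate of $\langle\Delta\rangle$.

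The main obstacle is the lack of a uniform argument. What one really wants is a fusion-system analog of Halasi--Maróti: for any saturated fusion system $\mathcal{F}$ on a $p$-group $P$, there exist two elements of $P$ whose joint $\mathcal{F}$-centralizer is trivial. The classical Halasi--Maróti theorem, however, depends essentially on $p$-solvability and coprime action, neither of which survives in the exotic setting. Overcoming this gap -- perhaps by reinterpreting coprime actions on elementary abelian modules inside the Alperin--Goldschmidt framework as actions of $\Aut_\mathcal{F}(R)/\Inn(R)$ on the Frattini quotients of essential subgroups -- is the crucial step, and until it is achieved the conjecture will likely remain contingent on the ongoing classification of exotic fusion systems.
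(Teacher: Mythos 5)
You are being asked about \autoref{conbase}, which is a \emph{conjecture}: the paper does not prove it, and neither does your proposal. What you have written is a research programme, not a proof, and you concede as much yourself when you say the crucial step remains to be ``achieved''. Every load-bearing piece is open. In the realizable case you reduce to \autoref{con1}, which is itself an open conjecture that the paper only establishes for $p$-solvable groups, groups with abelian Sylow $p$-subgroups, symmetric and alternating groups, (projective) (special) linear groups, and sporadic groups. Your sketch for the remaining groups of Lie type addresses only the defining-characteristic case $p\mid q$, where a regular unipotent element has small centralizer; for cross-characteristic primes $p\nmid q$ --- which is where most of the actual work in \autoref{gl} lies --- there is no unipotent element of order $p$ and no Borel subgroup containing a Sylow $p$-subgroup, so the proposed argument does not get started. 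In the exotic case, a case-by-case treatment of ``the known families'' cannot close the conjecture, because exotic fusion systems at odd primes are not classified; at best this would verify the conjecture for the currently known examples. Finally, the split into realizable versus exotic systems is not an effective dichotomy one can work with, since deciding exoticity is itself hard.

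For comparison, the paper proves only the special case $|P|\le p^4$, and does so concretely: if $\mathcal{F}$ has no essential subgroups, one applies Halasi--Podoski to $P\rtimes\Out_{\mathcal{F}}(P)$; otherwise one finds a $2$-generated subgroup $Q$ with $\C_P(Q)=\Z(Q)$, and in the one remaining configuration ($|P|=p^4$ with all maximal subgroups above $\Z(P)$ elementary abelian of rank $3$) one invokes Oliver's structure results to obtain an $\Aut_{\mathcal{F}}(Q)$-invariant decomposition $Q=\langle x,y\rangle\times\langle z\rangle$, chooses $\Delta=\{xz,y\}$ with $\C_{\Aut_{\mathcal{F}}(Q)}(\Delta)=1$, and checks triviality of $\C_{\mathcal{F}}(\Delta)$ via the extension axiom. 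If you want to produce something verifiable, either prove the conjecture for a concrete new class (for instance the Ruiz--Viruel systems on extraspecial groups of order $7^3$, or the Benson--Solomon systems, where the automorphism groups of the essential subgroups are explicitly known), or make precise and prove your proposed fusion-system analogue of Halasi--Mar\'oti. As written, neither is done, and there is no proof here to assess.
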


We show that \autoref{conbase} holds for $p$-groups of order at most $p^4$.

\section{Results}

\begin{proof}[Proof of \autoref{main}]
Let $G$ be a $p$-solvable group with Sylow $p$-subgroup $P$. Let $N:=\pcore_{p'}(G)$. For $Q\subseteq P$, $\C_G(Q)N/N$ is contained in $\C_{G/N}(QN/Q)$. Hence, $\C_G(Q)$ is $p$-nilpotent whenever $\C_{G/N}(QN/Q)$ is $p$-nilpotent. Thus, we may assume that $N=1$. Instead we consider $N:=\pcore_p(G)$. Since $G$ is $p$-solvable, $N\ne 1$. We show by induction on $|N|$ that there exists a $p$-base $\Delta\subseteq N$ such that $\C_G(\Delta)\le N$. By the Hall--Higman lemma (see \cite[Hilfssatz~VI.6.5]{Huppert}), $\C_{G/N}(N/\Phi(N))=N/\Phi(N)$ where $\Phi(N)$ denotes the Frattini subgroup of $N$. 
It follows that $\pcore_{p'}(G/\Phi(N))=1$.
Hence, by induction we may assume that $N$ is elementary abelian. Then $\overline{G}:=G/N$ acts faithfully on $N$ and it suffices to find a $p$-base $\Delta\subseteq N$ such that $\C_{\overline{G}}(\Delta)=1$. Thus, we may assume that $G=N\rtimes H$ where $\C_G(N)=N$ and $\pcore_p(H)=1$. 

Note that $\Phi(G)\le \F(G)=N$ where $\F(G)$ is the Fitting subgroup of $G$. Since $H$ is contained in a maximal subgroup of $G$, we even have $\Phi(G)<N$. Let $K\unlhd H$ be the kernel of the action of $H$ on $N/\Phi(G)$. By way of contradiction, suppose that $K\ne 1$. 
Since $K$ is $p$-solvable and $\pcore_p(K)\le\pcore_p(H)=1$, also $K_0:=\pcore_{p'}(K)\ne 1$. Now $K_0$ acts coprimely on $N$ and we obtain 
\[N=[K_0,N]\C_N(K_0)=\Phi(G)\C_N(K_0)\]
as is well-known. Both $\Phi(G)$ and $\C_N(K_0)H$ lie in a maximal subgroup $M$ of $G$. But then $G=NH=\Phi(G)\C_N(K_0)H\le M$, a contradiction. Therefore, $H$ acts faithfully on $N/\Phi(G)$ and we may assume that $\Phi(G)=1$. 
Then there exist maximal subgroups $M_1,\ldots,M_n$ of $G$ such that $N_i:=M_i\cap N<N$ for $i=1,\ldots,n$ and $\bigcap_{i=1}^nN_i=1$. Since $G=M_iN$, the quotients $N/N_i$ are simple $\FF_pH$-modules and $N$ embeds into $N/N_1\times\ldots\times N/N_n$. Hence, the action of $H$ on $N$ is faithful and completely reducible. Now by the main result of Halasi--Maróti~\cite{HalasiMaroti} there exists a $p$-base with the desired properties.
\end{proof}

Next we work towards \autoref{thm2}. 

\begin{Lem}\label{lemsc}
Let $P$ be a Sylow $p$-subgroup of $G$. Let $Q\unlhd P$ such that $\C_P(Q)\le Q$. Then every generating set of $Q$ is a $p$-base of $G$.
\end{Lem}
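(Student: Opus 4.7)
Since $\Delta$ generates $Q$, we have $\C_G(\Delta)=\C_G(Q)$, so the task reduces to showing that $\C_G(Q)$ is $p$-nilpotent. My strategy is to exhibit a \emph{central} Sylow $p$-subgroup of $\C_G(Q)$; then Schur--Zassenhaus (applied to the resulting normal Sylow) immediately yields a normal $p$-complement.

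The first step is to locate a Sylow $p$-subgroup of $\C_G(Q)$ inside $P$. The obvious candidate is $\C_P(Q)=P\cap\C_G(Q)$. To see it is Sylow in $\C_G(Q)$, I would work inside $\N_G(Q)$. Because $Q\unlhd P$, the Sylow $p$-subgroup $P$ of $G$ lies in $\N_G(Q)$ and hence is a Sylow $p$-subgroup of $\N_G(Q)$. Pick any Sylow $p$-subgroup $R$ of $\C_G(Q)$; then $QR$ is a $p$-subgroup of $\N_G(Q)$ (the two factors commute), so it lies in some Sylow $p$-subgroup of $\N_G(Q)$. Using Sylow's theorem in $\N_G(Q)$, I can conjugate this containing subgroup into $P$ by an element $z\in\N_G(Q)$. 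Since $z$ normalizes $Q$ and $R$ centralizes $Q$, the conjugate $z^{-1}Rz$ still centralizes $Q$, so it lies in $P\cap\C_G(Q)=\C_P(Q)$. Comparing orders shows $z^{-1}Rz=\C_P(Q)$, so $\C_P(Q)$ is indeed a Sylow $p$-subgroup of $\C_G(Q)$.

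Now the hypothesis $\C_P(Q)\le Q$ takes over: combined with the trivial inclusion $\C_P(Q)\le\C_G(Q)$, it gives $\C_P(Q)\le Q\cap\C_G(Q)=\Z(Q)$. In particular, $\C_P(Q)$ is centralized by $\C_G(Q)$ (any element of $\C_G(Q)$ centralizes $Q$ and hence every subgroup of $Q$). Thus $\C_G(Q)$ has a central Sylow $p$-subgroup, say $S=\C_P(Q)$. Then $S\unlhd\C_G(Q)$ and $\C_G(Q)/S$ is a $p'$-group, so Schur--Zassenhaus produces a complement, forcing $\C_G(Q)=S\times\pcore_{p'}(\C_G(Q))$; in particular $\C_G(Q)$ is $p$-nilpotent.

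The only genuinely non-trivial step is the Sylow identification in the second paragraph; once $\C_P(Q)$ is recognized as a Sylow $p$-subgroup of $\C_G(Q)$, the hypothesis $\C_P(Q)\le Q$ instantly places it in the center and the rest is formal. I expect no further obstacles.
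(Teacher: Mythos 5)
Your proof is correct and follows essentially the same route as the paper: identify $\C_P(Q)=\Z(Q)$ as a Sylow $p$-subgroup of $\C_G(Q)$ via $P\in\Syl_p(\N_G(Q))$, observe it is central, and apply Schur--Zassenhaus to get $\C_G(Q)=\Z(Q)\times\pcore_{p'}(\C_G(Q))$. You merely spell out the Sylow identification that the paper states without proof.
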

\begin{proof}
Since $P\in\Syl_p(\N_G(Q))$, we have $\Z(Q)=\C_P(Q)\in\Syl_p(\C_G(Q))$ and therefore $\C_G(Q)=\Z(Q)\times\pcore_{p'}(\C_G(Q))$ by the Schur--Zassenhaus Theorem. 
\end{proof}

\begin{Lem}\label{lemquot}
Let $\Delta$ be a $p$-base of $G$ and let $N\le\Z(G)$. Then $\overline{\Delta}:=\{xN:x\in\Delta\}$ is a $p$-base of $G/N$.
\end{Lem}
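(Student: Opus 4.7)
The plan is first to observe that $\overline{\Delta}$ sits in the Sylow $p$-subgroup $PN/N$ of $G/N$ (where $P\supseteq\Delta$ is Sylow in $G$), and then to prove that $\C_{G/N}(\overline{\Delta})$ is $p$-nilpotent. Let $H$ denote the full preimage of this centralizer in $G$, so the goal becomes that $H/N$ is $p$-nilpotent, and set $K:=\C_G(\Delta)$, which is $p$-nilpotent by hypothesis. Because $N\le\Z(G)$ I can also decompose $N=N_p\times N_{p'}$ with both factors central in $G$.

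The decisive step, and the main obstacle, is to show that $H/K$ is a $p$-group. For $h\in H$ and $x\in\Delta$ one has $[h,x]\in N$ by the definition of $H$; centrality of $N$ collapses the commutator identity $[h,xy]=[h,y][h,x]^y$ to $[h,xy]=[h,y][h,x]$, so $x\mapsto[h,x]$ extends to a homomorphism $\chi_h:\langle\Delta\rangle\to N$. Since $\langle\Delta\rangle$ is a $p$-group and $N_{p'}$ has coprime order, $\chi_h$ actually takes values in $N_p$. Exploiting centrality of $N_p$ once more yields $[h_1h_2,x]=[h_1,x]^{h_2}[h_2,x]=[h_1,x][h_2,x]$, so $\mu:h\mapsto\chi_h$ is itself a group homomorphism $H\to\Hom(\langle\Delta\rangle,N_p)$ with kernel $K$ and abelian $p$-group target. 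Hence $H/K$ embeds in a $p$-group. Neither collapse survives without the centrality hypothesis, so this is where the argument genuinely uses $N\le\Z(G)$.

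To conclude I set $L:=\pcore_{p'}(K)$, the unique Hall $p'$-subgroup of the $p$-nilpotent group $K$. Because $K=\Ker\mu$ is normal in $H$ and $L$ is characteristic in $K$, we obtain $L\unlhd H$. The central $p'$-subgroup $N_{p'}\le N\le K$ must lie in $L$, so $L\cap N=N_{p'}$ and $LN/N\cong L/N_{p'}$ is a normal $p'$-subgroup of $H/N$. A short index count gives $|H/N:LN/N|=|H/K|\cdot|K/L|/|N_p|$, which is a $p$-power: the numerator is a $p$-power by the preceding paragraph together with the $p$-nilpotency of $K$, and $|N_p|$ divides $|K/L|$ since $N_p\cap L=1$ embeds $N_p$ into $K/L$. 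Thus $LN/N$ is a normal $p$-complement in $H/N=\C_{G/N}(\overline{\Delta})$, completing the proof.
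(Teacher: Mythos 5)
Your proof is correct. It shares the paper's overall strategy --- show that the full preimage $H$ of $\C_{G/N}(\overline{\Delta})$ is a $p$-group extension of $K=\C_G(\Delta)$ and then deduce $p$-nilpotency of $H/N$ --- but it executes the key step by a different mechanism. The paper lets each $g\in H$ act on the unique Sylow $p$-subgroup $P_0$ of the nilpotent group $\langle\Delta\rangle N$, notes that $g$ centralizes both $P_0/(P_0\cap N)$ and the central subgroup $P_0\cap N$, and concludes via a stability-group argument that $H$ induces a $p$-group of automorphisms, so that $\C_{G/N}(\overline{\Delta})=Q\C_G(\Delta)/N$ for some $p$-subgroup $Q$; the $p$-nilpotency of $Q\C_G(\Delta)$ and of its image modulo $N$ then finishes the proof. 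You instead prove the relevant special case of the stability-group theorem by hand: the commutator map $\mu:h\mapsto(x\mapsto[h,x])$ is a homomorphism $H\to\Hom(\langle\Delta\rangle,N_p)$ with kernel exactly $K$, which embeds $H/K$ into an abelian $p$-group. This is more elementary and self-contained, and it is cleaner on one point: the paper's phrasing that each $g$ ``induces a $p$-element'' in $\Aut(P_0)$ leaves implicit why the whole induced subgroup is a $p$-group (one needs that it lies in the stability group of the series $1\le P_0\cap N\le P_0$), whereas your $\mu$ makes this transparent. Your conclusion is likewise more explicit, exhibiting $LN/N$ as a normal $p$-complement of $H/N$ by a direct index count rather than appealing to closure properties of $p$-nilpotency under extension by $p$-groups and under quotients. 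Both routes are valid; yours trades the paper's brevity for a completely elementary verification.
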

\begin{proof}
Let $gN\in\C_{G/N}(\overline{\Delta})$. Then $g$ normalizes the nilpotent group $\langle\Delta\rangle N$. Hence, $g$ acts on the unique Sylow $p$-subgroup $P$ of $\langle\Delta\rangle N$. Since $g$ centralizes \[\langle\overline{\Delta}\rangle=\langle\Delta\rangle N/N=PN/N\cong P/P\cap N\]
and $P\cap N\le N\le\Z(G)$, $g$ induces a $p$-element in $\Aut(P)$ and also in $\Aut(\langle\Delta\rangle N)$. 
Consequently, there exists a $p$-subgroup $Q\le\N_G(\langle\Delta\rangle N)$ such that $\C_{G/N}(\overline{\Delta})=Q\C_G(\Delta N)/N=Q\C_G(\Delta)/N$. Since $\C_G(\Delta)$ is $p$-nilpotent, so is $Q\C_G(\Delta)$ and the claim follows. 
\end{proof}

The following implies the first part of \autoref{thm2}.

\begin{Prop}\label{abel}
Let $P$ be a Sylow $p$-subgroup of $G$ with nilpotency class $c$. Then $G$ has a $p$-base of size $2c$.
\end{Prop}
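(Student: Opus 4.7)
The plan is to pick two elements from each of the $c$ factors of the upper central series of $P$ and to argue that together they form a $p$-base of size at most $2c$. Write $1=Z_0<Z_1<\ldots<Z_c=P$ for the upper central series and set $N:=\N_G(P)$. Each $Z_i$ is characteristic in $P$ and hence normal in $N$; since $[P,Z_i]\le Z_{i-1}$ and $P$ is a Sylow $p$-subgroup of $N$, the quotient $A_i:=N/\C_N(Z_i/Z_{i-1})$ is a $p'$-group acting faithfully on the abelian $p$-group $Z_i/Z_{i-1}$. I would then apply the Halasi--Podoski theorem~\cite{base2} to each $A_i$ and pick $x_i,y_i\in Z_i$ whose images modulo $Z_{i-1}$ have trivial joint stabilizer in $A_i$; set $\Delta:=\{x_i,y_i:1\le i\le c\}$.

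The key intermediate step is to verify that $\C_N(\Delta)$ is $p$-nilpotent. Given $g\in\C_N(\Delta)$ with Sylow decomposition $g=g_pg_{p'}$, both factors (as powers of $g$) centralize $\Delta$, and the Halasi--Podoski choice forces the image of $g_{p'}$ in each $A_i$ to be trivial, i.e.\ $g_{p'}$ acts trivially on every $Z_i/Z_{i-1}$. By the standard coprime-action lemma---a $p'$-automorphism of a $p$-group trivial on every factor of a central series is already trivial---we then have $g_{p'}\in\C_G(P)$. Hence $\C_N(\Delta)=\C_P(\Delta)\cdot\C_G(P)$; using $\Z(P)\le\C_P(\Delta)$ together with $\C_G(P)=\Z(P)\times\pcore_{p'}(\C_G(P))$, this simplifies to $\C_P(\Delta)\cdot\pcore_{p'}(\C_G(P))$, whose normal $p'$-complement $\pcore_{p'}(\C_G(P))$ certifies $p$-nilpotence.

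The main obstacle is to upgrade this to $p$-nilpotence of $\C_G(\Delta)$, because an element of $G$ centralizing $\Delta$ need not normalize $P$. I intend to bridge this gap with a Sylow argument: a Sylow $p$-subgroup $S$ of $\C_G(\Delta)$ satisfies that $S\cdot\langle\Delta\rangle$ is a $p$-group, hence lies in a Sylow $p$-subgroup of $G$, which is $G$-conjugate to $P$; applying Sylow's theorem inside $\C_G(\Delta)$ allows one---after $\C_G(\Delta)$-conjugation---to assume $S\le\C_P(\Delta)$. Combined with Frobenius's normal $p$-complement criterion, this should reduce the verification of $p$-nilpotence of $\C_G(\Delta)$ to the analysis already performed inside $N$. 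In the special case $c=1$ this step is transparent: $P=\Z(P)\le\C_G(\Delta)$ is an abelian Sylow of $\C_G(\Delta)$, the Halasi--Podoski choice forces $N\cap\C_G(\Delta)\le\C_G(P)$, and Burnside's transfer theorem immediately yields $p$-nilpotence of $\C_G(\Delta)$.
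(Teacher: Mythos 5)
Your local analysis inside $N:=\N_G(P)$ is essentially correct: each $N/\C_N(Z_i/Z_{i-1})$ is indeed a $p'$-group acting faithfully on $Z_i/Z_{i-1}$, Halasi--Podoski applies, and the coprime-action lemma then gives $\C_N(\Delta)=\C_P(\Delta)\cdot\pcore_{p'}(\C_G(P))$, which is $p$-nilpotent. The problem is that the step you yourself flag as the ``main obstacle'' is exactly where the theorem lives, and your proposed bridge does not carry the weight. Frobenius's criterion requires $\N_H(Q)/\C_H(Q)$ to be a $p$-group for \emph{every} $p$-subgroup $Q$ of $H:=\C_G(\Delta)$; such a normalizer contains elements that normalize $Q$ but no conjugate of $P$ in any controlled way, so the computation of $\C_N(\Delta)$ says nothing about them. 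There is also a smaller flaw in the Sylow step: Sylow's theorem inside $H$ only conjugates $S$ to another Sylow $p$-subgroup of $H$, and this lands inside $\C_P(\Delta)$ only if $\C_P(\Delta)$ is already known to be a Sylow $p$-subgroup of $H$, i.e.\ if $\langle\Delta\rangle$ is fully centralized with respect to $P$ --- which is not automatic and is not arranged by your choice of $\Delta$.

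This local-to-global passage is genuinely hard: the paper does not do it elementarily either. It proceeds one layer of the central series at a time, by induction on $c$. After choosing $\Delta_0\subseteq\Z(P)$ via Halasi--Podoski, it uses Burnside's fusion argument to show that every element of $\Z(P)$ is isolated in $H=\C_G(\Delta_0)$, and then invokes the $\Z^*$-Theorem (hence the classification of finite simple groups) to conclude that $\Z(H/\pcore_{p'}(H))=\Z(P)\pcore_{p'}(H)/\pcore_{p'}(H)$; only then can it pass to $H/\Z(P)\pcore_{p'}(H)$, whose Sylow $p$-subgroup has class $c-1$, and recurse. Without an ingredient of comparable strength (or a genuinely worked-out verification of the Frobenius condition for all $p$-local subgroups of $\C_G(\Delta)$), your argument establishes only that $\C_{\N_G(P)}(\Delta)$ is $p$-nilpotent, not that $\C_G(\Delta)$ is. Your base case $c=1$ is fine and coincides with the paper's use of Burnside's transfer theorem.
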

\begin{proof}
The $p'$-group $\N_G(\Z(P))/\C_G(\Z(P))$ acts faithfully on $\Z(P)$. By Halasi--Podoski~\cite{base2} there exists $\Delta_0=\{x,y\}\subseteq\Z(P)$ such that $\N_H(\Z(P))\le\C_H(\Z(P))$ where $H:=\C_G(\Delta_0)$. If $c=1$, then $P=\Z(P)$ is abelian and Burnside's transfer theorem implies that $H$ is $p$-nilpotent. Hence, let $c>1$. 
By a well-known fusion argument of Burnside, elements of $\Z(P)$ are conjugate in $H$ if and only if they are conjugate in $\N_H(\Z(P))$. Consequently, all elements of $\Z(P)$ are isolated in our situation.
By the $\Z^*$-Theorem (assuming the classification of finite simple groups), we obtain 
\[\Z(H/\pcore_{p'}(H))=\Z(P)\pcore_{p'}(H)/\pcore_{p'}(H).\] 
The group $\overline{H}:=H/\Z(P)\pcore_{p'}(H)$ has Sylow $p$-subgroup $\overline{P}\cong P/\Z(P)$ of nilpotency class $c-1$. By induction on $c$ there exists a $p$-base $\overline{\Delta_1}\subseteq\overline{P}$ of $\overline{H}$ of size $2(c-1)$. We may choose $\Delta_1\subseteq P$ such that $\overline{\Delta_1}=\{\overline{x}:x\in\Delta_1\}$. Since $\overline{\C_H(\Delta_1)}\le\C_{\overline{H}}(\overline{\Delta_1})$ is $p$-nilpotent, so is
\[\bigl(\C_H(\Delta_1)\Z(P)\pcore_{p'}(H)/\pcore_{p'}(H)\bigr)/\Z(H/\pcore_{p'}(H)).\]
It follows that $\C_H(\Delta_1)\Z(P)\pcore_{p'}(H)/\pcore_{p'}(H)$ and $\C_H(\Delta_1)=\C_G(\Delta_0\cup\Delta_1)$ are $p$-nilpotent as well.
Hence, $\Delta:=\Delta_0\cup\Delta_1$ is a $p$-base of $G$ of size (at most) $2c$.
\end{proof}

\begin{Prop}\label{symalt}
The symmetric and alternating groups $S_n$ and $A_n$ have commutative $p$-bases of size $2$ for every prime $p$.
\end{Prop}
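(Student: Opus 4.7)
The plan is to exhibit, for each $G\in\{S_n,A_n\}$ and each prime $p$, a single $p$-element $x\in G$ whose centralizer $\C_G(x)$ is already $p$-nilpotent; choosing a Sylow $p$-subgroup $P$ of $G$ containing $x$, the set $\Delta=\{x\}$ (or $\Delta=\{x,1\}$, if size exactly two is demanded) is then a commutative $p$-base.

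Write $n=\sum_{i\ge 0}a_ip^i$ in base $p$ and let $x\in S_n$ be a permutation with cycle type consisting of $a_i$ disjoint cycles of length $p^i$ for each $i$. The standard cycle-type description of centralizers in $S_n$ yields
\[
\C_{S_n}(x)\cong\prod_{i\ge 0}\bigl(C_{p^i}\wr S_{a_i}\bigr).
\]
In each factor, the base group $(C_{p^i})^{a_i}$ is a normal Sylow $p$-subgroup and the complement $S_{a_i}$ has order $a_i!$, which is coprime to $p$ since $a_i<p$. Hence every factor, and so the whole product, is $p$-nilpotent. This settles $G=S_n$. For $G=A_n$ with $p$ odd, the sign of $x$ equals $\prod_i(-1)^{a_i(p^i-1)}=1$ because each $p^i-1$ is even; thus $x\in A_n$, and $\C_{A_n}(x)\le\C_{S_n}(x)$ inherits $p$-nilpotence.

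For $G=A_n$ with $p=2$ we may assume $n\ge 4$, since $\Syl_2(A_n)=1$ otherwise. Here $a_i\in\{0,1\}$, so $\C_{S_n}(x)\cong\prod_{a_i=1}C_{2^i}$ is already an abelian $2$-group. If $x\in A_n$ we take $\Delta=\{x\}$; otherwise we take $\Delta=\{x^2\}$, which is always even and, since $n\ge 4$ forces $a_k=1$ for some $k\ge 2$, non-trivial. Squaring $x$ turns each $2^k$-cycle ($k\ge 2$) into two $2^{k-1}$-cycles and each $2$-cycle into two fixed points, whence
\[
\C_{S_n}(x^2)\cong S_{a_0+2a_1}\times\prod_{k\ge 2,\,a_k=1}\bigl(C_{2^{k-1}}\wr S_2\bigr).
\]
Since $a_0+2a_1\le 3$, the first factor is one of $S_0,S_1,S_2,S_3$, all of which are $2$-nilpotent (the nontrivial case $S_3$ has a normal Sylow $3$-subgroup), while the remaining factors are $2$-groups. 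Thus $\C_{S_n}(x^2)$, and a fortiori $\C_{A_n}(x^2)$, is $2$-nilpotent.

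The only non-routine ingredient is the wreath-product description of centralizers in $S_n$ combined with the elementary fact that $a_i<p$ forces $p\nmid a_i!$; everything else is bookkeeping, and commutativity of $\Delta$ is automatic since it consists of a single element (or that element together with the identity).
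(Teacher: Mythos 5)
There is a genuine gap at the heart of your argument, namely the claim that a single element $x$ suffices. You assert that $C_{p^i}\wr S_{a_i}$ is $p$-nilpotent because its base group $(C_{p^i})^{a_i}$ is a normal Sylow $p$-subgroup with a complement $S_{a_i}$ of $p'$-order. That only shows the group is \emph{$p$-closed}; $p$-nilpotence requires a \emph{normal} $p$-complement, and the complement $S_{a_i}$ is not normal once $i\ge 1$ and $a_i\ge 2$. Indeed, a normal $p'$-subgroup $N$ of $W:=C_{p^i}\wr S_{a_i}$ would satisfy $[N,(C_{p^i})^{a_i}]\le N\cap (C_{p^i})^{a_i}=1$, hence centralize the base group; but the base group is self-centralizing in $W$ for $i\ge1$ and $a_i\ge 2$, forcing $N=1$. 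Concretely, for $p=3$, $n=6$ and $x=(123)(456)$ one has $\C_{S_6}(x)\cong C_3\wr S_2$ of order $18$, whose three involutions form a single conjugacy class; it has no normal subgroup of order $2$, so it is not $3$-nilpotent and $\{x\}$ is not a $3$-base of $S_6$. This is exactly why the paper's proof introduces a second commuting element $y=\prod_{i,j}x_{ij}^{\,j}$: the distinct exponents distinguish the $a_i$ cycles of equal length $p^i$, so no nontrivial permutation of these cycles survives in $\C_G(x,y)$, which therefore collapses to the abelian $p$-group generated by the cycles (together with the $p'$-group $S_{a_0}$ on the fixed points).

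Your treatment of $A_n$ for $p=2$ is unaffected by this problem, since there $a_i\le 1$ and no symmetric top group appears; it is essentially the paper's argument with ``replace $x$ by $x^2$'' in place of ``split one cycle into two of half the length,'' and it goes through. But the cases $G=S_n$ and $G=A_n$ with $p$ odd are precisely the ones where the second base element is indispensable, and your proposal omits it.
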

\begin{proof}
Let $n=\sum_{i=0}^k a_ip^i$ be the $p$-adic expansion of $n$. Suppose first that $G=S_n$. Let \[x=\prod_{i=0}^k\prod_{j=1}^{a_i}x_{ij}\in G\] 
be a product of disjoint cycles $x_{ij}$ where $x_{ij}$ has length $p^i$ for $j=1,\ldots,a_i$. Then $x$ is a $p$-element and 
\[\C_G(x)\cong\prod_{i=0}^kC_{p^i}\wr S_{a_i}.\] 
Since $a_i<p$, $P:=\langle x_{ij}:i=0,\ldots,k,j=1,\ldots,a_i\rangle$ is an abelian Sylow $p$-subgroup of $\C_G(x)$. Let $y:=\prod_{i=0}^k\prod_{j=1}^{a_i}x_{ij}^{j}\in P$. It is easy to see that $\Delta:=\{x,y\}$ is a commutative $p$-base of $G$ with
$\C_G(\Delta)=P$.

Now let $G=A_n$. If $p>2$, then $x,y$ lie in $A_n$ as constructed above and the claim follows from $\C_{A_n}(\Delta)\le\C_{S_n}(\Delta)$. Hence, let $p=2$. If $\sum_{i=1}^ka_i\equiv 0\pmod{2}$, then we still have $x\in A_n$ and $\C_G(x)=\langle x_{ij}:i,j\rangle$ is already a $2$-group. Thus, we have a $2$-base of size $1$ in this case. 
In the remaining case, let $m\ge 1$ be minimal such that $a_m=1$. We adjust our definition of $x$ by replacing $x_{m1}$ with a disjoint product of two cycles of length $2^{m-1}$. Then $x\in A_n$ and $\C_G(x)$ is a $2$-group or a direct product of a $2$-group and $S_3$ (the latter case happens if and only if $m=1=a_0$). We clearly find a $2$-element $y\in\C_G(x)$ such that $\C_G(x,y)$ is a $2$-group.
\end{proof}

The following elementary facts are well-known, but we provide proofs for the convenience of the reader.

\begin{Lem}\label{cyclo}
Let $p$ be a prime and let $q$ be a prime power such that $p\nmid q$. Let $e\mid p-1$ be the multiplicative order of $q$ modulo $p$. Let $p^s$ be the $p$-part of $q^e-1$. Then for every $n\ge 1$ the polynomial $X^{p^n}-1$ decomposes as
\[X^{p^n}-1=(X-1)\prod_{k=1}^{(p^s-1)/e}\gamma_{0,k}\prod_{i=1}^{n-s}\prod_{k=1}^{\phi(p^s)/e}\gamma_{i,k}\]
where the $\gamma_{i,k}$ are pairwise coprime polynomials in $\FF_q[X]$ of degree $ep^i$ for $i=0,\ldots,n-s$. 
\end{Lem}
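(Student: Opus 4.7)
My approach would proceed via cyclotomic polynomials, reducing the factorization to the behavior of $\Phi_{p^i}(X)$ over $\FF_q$, which is controlled by the multiplicative order of $q$ modulo $p^i$. Starting from the standard identity $X^{p^n}-1=\prod_{i=0}^{n}\Phi_{p^i}(X)$, I would combine it with the fact that each $\Phi_{p^i}(X)$ is separable over $\FF_q$ (since $p\nmid q$) and factors into $\phi(p^i)/d_i$ pairwise distinct monic irreducibles of degree $d_i$, where $d_i$ denotes the multiplicative order of $q$ modulo $p^i$.

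The core computation is then to determine the $d_i$. For $i\le s$, $p^s\mid q^e-1$ forces $d_i\le e$, while $e$ is already the order mod $p$, so $d_i=e$. For $i>s$ and $p$ odd, lifting the exponent applied to $q^e=1+p^sa$ (with $p\nmid a$) gives $v_p(q^{ep^j}-1)=s+j$, hence $d_{s+j}=ep^j$. Granting this, the assembly becomes mostly counting: $\Phi_{p^1},\dots,\Phi_{p^s}$ together contribute $\sum_{i=1}^{s}\phi(p^i)/e=(p^s-1)/e$ irreducibles of degree $e$, which serve as the $\gamma_{0,k}$, and each $\Phi_{p^{s+j}}$ for $j\ge 1$ contributes $\phi(p^{s+j})/(ep^j)=\phi(p^s)/e$ irreducibles of degree $ep^j$, which serve as the $\gamma_{j,k}$. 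Pairwise coprimality is immediate from the pairwise coprimality of distinct cyclotomic polynomials over any field in which they are separable.

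The main obstacle I anticipate is the case $p=2$: the $2$-adic form of LTE gives $v_2(q^{2^j}-1)=s+j$ only when $4\mid q-1$, whereas for $q\equiv 3\pmod{4}$ (which forces $s=1$) one has $v_2(q^{2^j}-1)=v_2(q+1)+j$ for $j\ge 1$, so $d_i$ is strictly smaller than $2^{i-s}$ for large $i$. The lemma sidesteps this by requiring only pairwise coprimality of the $\gamma_{i,k}$ rather than irreducibility: in this exceptional case $\phi(2^s)/e=1$, so only one $\gamma_{j,1}$ is to be produced for each $j$, and I would simply set $\gamma_{j,1}:=\Phi_{2^{1+j}}(X)$, a (reducible) polynomial of degree $2^j=ep^j$. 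This preserves both the degree count and the pairwise coprimality, completing the argument.
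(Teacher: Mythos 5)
Your proposal is correct and is essentially the paper's own argument in different notation: the paper factors $X^{p^n}-1$ according to the orbits of $\langle q+p^n\ZZ\rangle$ on $\ZZ/p^n\ZZ$, which is exactly the factorization of the cyclotomic polynomials $\Phi_{p^i}$ over $\FF_q$ that you use, with the same determination of the order of $q$ modulo $p^i$ (yours via lifting the exponent, the paper's stated directly) and the same device for the exceptional case $p=2$, $s=1$, namely taking a single reducible $\gamma_{i,1}$ of the correct degree since $\phi(p^s)/e=1$ there. No gaps; the two proofs coincide in substance.
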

\begin{proof}
Let $\zeta$ be a primitive root of $X^{p^n}-1$ in some finite field extension of $\FF_q$. Then \[X^{p^n}-1=\prod_{k=0}^{p^n-1}(X-\zeta^k).\] 
Recall that $\FF_q$ is the fixed field under the Frobenius automorphism $c\mapsto c^q$. Hence, the irreducible divisors of $X^{p^n}-1$ in $\FF_q[X]$ correspond to the orbits of $\langle q+p^n\ZZ\rangle$ on $\ZZ/p^n\ZZ$ via multiplication. The trivial orbit corresponds to $X-1$. For $i=1,\ldots,s$ the order of $q$ modulo $p^i$ is $e$ by the definition of $s$. This yields $(p^s-1)/e$ non-trivial orbits of length $e$ in $p^{n-s}\ZZ/p^n\ZZ$. The corresponding irreducible factors are denoted $\gamma_{0,k}$ for $k=1,\ldots,(p^s-1)/e$. 

For $i\ge 1$ the order of $q$ modulo $p^{s+i}$ divides $ep^i$ (it can be smaller if $p=2$ and $s=1$).
We partition $(p^{n-s-i}\ZZ/p^n\ZZ)^\times$ into $\phi(p^{s+i})/(ep^i)=\phi(p^s)/e$ unions of orbits under $\langle q+p^n\ZZ\rangle$ such that each union has size $ep^i$. The corresponding polynomials $\gamma_{i,1},\ldots,\gamma_{i,\phi(p^s)/e}$ are pairwise coprime (but not necessarily irreducible).
\end{proof}

\begin{Lem}\label{cent}
Let $A$ be an $n\times n$-matrix over an arbitrary field $F$ such that the minimal polynomial of $A$ has degree $n$. Then every matrix commuting with $A$ is a polynomial in $A$.  
\end{Lem}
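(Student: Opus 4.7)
The plan is to find a cyclic vector for $A$, that is, a vector $v\in F^n$ such that $v,Av,\ldots,A^{n-1}v$ form a basis of $F^n$. Once such a $v$ is in hand, the lemma drops out quickly: for any $B$ commuting with $A$, I expand $Bv=\sum_{i=0}^{n-1}c_iA^iv$ in this basis and set $p(X):=\sum_{i=0}^{n-1}c_iX^i\in F[X]$. Then for every $k\ge 0$ commutativity gives
\[B(A^kv)=A^k(Bv)=A^kp(A)v=p(A)(A^kv),\]
so $B$ and $p(A)$ agree on the basis $\{A^kv:0\le k<n\}$ and hence $B=p(A)$.

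The real work is producing a cyclic vector. I would let $m(X)$ denote the minimal polynomial of $A$, of degree $n$, and factor $m=p_1^{e_1}\cdots p_r^{e_r}$ into powers of distinct monic irreducibles. Primary decomposition then gives an $A$-invariant splitting $F^n=V_1\oplus\ldots\oplus V_r$ with $V_i:=\ker p_i(A)^{e_i}$. Minimality of $m$ forces the existence of $v_i\in V_i$ with $p_i(A)^{e_i-1}v_i\ne 0$, since otherwise $m/p_i$ would annihilate all of $F^n$; combined with $v_i\in V_i$ this means that the $F[X]$-annihilator of $v_i$ is exactly $(p_i^{e_i})$. Setting $v:=v_1+\ldots+v_r$, any $f\in F[X]$ with $f(A)v=0$ satisfies $f(A)v_i=0$ in each $V_i$ (the direct sum is preserved by $f(A)$), whence $p_i^{e_i}\mid f$ for every $i$ and therefore $m\mid f$ by coprimality. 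Thus no nonzero polynomial of degree $<n$ annihilates $v$, so $v,Av,\ldots,A^{n-1}v$ are linearly independent and consequently form a basis of $F^n$, as desired.

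The main obstacle, and the only step requiring actual work, is the construction of the cyclic vector via primary decomposition; the passage from its existence to the commuting statement is the one-line computation shown in the first paragraph.
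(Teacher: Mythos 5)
Your proof is correct and takes essentially the same route as the paper's: both produce a cyclic vector $v$ and then identify $B$ with $p(A)$ by comparing their action on the basis $\{v,Av,\ldots,A^{n-1}v\}$. The only difference is that the paper simply cites the standard fact that $A$ is similar to a companion matrix, whereas you supply a self-contained construction of the cyclic vector via primary decomposition; that argument is also correct.
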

\begin{proof}
By hypothesis, $A$ is similar to a companion matrix. Hence, there exists a vector $v\in F^n$ such that $\{v,Av,\ldots,A^{n-1}v\}$ is a basis of $F^n$. Let $B\in F^{n\times n}$ such that $AB=BA$. There exist $a_0,\ldots,a_{n-1}\in F$ such that $Bv=a_0v+\ldots+a_{n-1}A^{n-1}v$. Set $\gamma:=a_0+a_1X+\ldots+a_{n-1}X^{n-1}$. Then
\[BA^iv=A^iBv=a_0A^iv+\ldots+a_{n-1}A^{n-1}A^iv=\gamma(A)A^iv\]
for $i=0,\ldots,n-1$. Since $\{v,Av,\ldots,A^{n-1}v\}$ is a basis, we obtain $B=\gamma(A)$ as desired.
\end{proof}

\begin{Prop}\label{gl}
The groups $\GL(n,q)$, $\SL(n,q)$ and $\PSL(n,q)$ possess commutative $p$-bases of size $2$ for every prime $p$.
\end{Prop}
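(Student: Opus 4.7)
The plan is to separate the cases $p\mid q$ and $p\nmid q$, in each case construct a commutative set of at most two $p$-elements of $\GL(n,q)$ whose common centralizer is $p$-nilpotent, and then descend to $\SL(n,q)$ and $\PSL(n,q)$.

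\emph{Defining characteristic ($p\mid q$).} I would take $x\in\GL(n,q)$ to be a single Jordan block of size $n$ with eigenvalue $1$. Then $x\in\SL(n,q)$ automatically, and the minimal polynomial $(X-1)^n$ of $x$ equals its characteristic polynomial. By \autoref{cent}, $\C_{\GL(n,q)}(x)=\FF_q[x]^\times$, the unit group of the local ring $\FF_q[X]/((X-1)^n)$. This factors as $\FF_q^\times\times U$ with $U$ the $p$-group of principal units, hence is $p$-nilpotent. Thus $\{x\}$ is a $p$-base of $\GL(n,q)$ and, by intersection, of $\SL(n,q)$; the case of $\PSL(n,q)=\SL(n,q)/\Z(\SL(n,q))$ follows from \autoref{lemquot}. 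Adjoining any commuting $p$-element upgrades $\{x\}$ to a commutative base of size $2$.

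\emph{Cross characteristic ($p\nmid q$).} Let $e$ be the multiplicative order of $q$ modulo $p$ and $p^s=(q^e-1)_p$, as in \autoref{cyclo}. If $n<e$, then $p\nmid|\GL(n,q)|$ and the empty set is a $p$-base. Otherwise the plan is to split $\FF_q^n=V_1\oplus V_2$ (possibly with $V_2=0$) and place on each $V_i$ a $p$-element $x_i\in\GL(V_i)$ (extended by the identity on $V_{3-i}$) whose centralizer in $\GL(V_i)$ is $p$-nilpotent; the pair $\{x_1,x_2\}$ then commutes, and its common centralizer is $\C_{\GL(V_1)}(x_1)\times\C_{\GL(V_2)}(x_2)$. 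On each $V_i$, write $\dim V_i=r_i+em_i$ with $0\le r_i<e$, let $x_i$ act as the identity on an $r_i$-dimensional subspace, and on the remaining $em_i$-dimensional subspace as a block-diagonal sum of companion matrices of pairwise coprime polynomials $\gamma$ from \autoref{cyclo} whose degrees sum to $em_i$. Coprimality of the chosen $\gamma$'s and \autoref{cent} identify the centralizer on the $em_i$-dimensional part with a product of unit groups of finite field extensions of $\FF_q$, which is abelian; the $(X-1)^{r_i}$-block contributes $\GL(r_i,q)$, a $p'$-group because $r_i<e$. Hence each $\C_{\GL(V_i)}(x_i)$ is $p$-nilpotent. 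The split $n=\dim V_1+\dim V_2$ can always be chosen so that each $\dim V_i$ admits such a decomposition respecting the bounds $(p^s-1)/e$ and $\phi(p^s)/e$ on the number of available factors of each degree.

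For $\SL(n,q)$ one additionally needs $\det(x_i)=1$. When $e>1$, the condition $(q-1)_p=1$ makes the $\FF_{q^{ep^i}}/\FF_q$-norm of every $p$-power root of unity trivial, so $x_i\in\SL$ automatically. When $e=1$, the norms are nontrivial, but one exploits the freedom to include varying linear factors $X-\eta$ with $\eta\in\mu_{p^s}(\FF_q)$: each such factor contributes $\eta$ multiplicatively to $\det(x_i)$ and only an abelian $\FF_q^\times$-factor to the centralizer, so a subset-product argument in the cyclic group $\mu_{p^s}(\FF_q)$ tunes $\det(x_i)$ to $1$. From $\SL(n,q)$ one passes to $\PSL(n,q)$ by \autoref{lemquot}, and taking $x_2=1$ (or any commuting $p$-element) gives the size $2$ and commutativity.

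The main obstacle is the bookkeeping in the cross-characteristic case: verifying that for every $n$ both the split $n=\dim V_1+\dim V_2$ and the choice of factors on each $V_i$ can simultaneously be made so that the centralizer is $p$-nilpotent and $\det(x_i)=1$. This reduces to an elementary but case-dependent count relating base-$p$ digits of the $m_i$ to the factor-multiplicity bounds $(p^s-1)/e$ and $\phi(p^s)/e$, combined with a subset-product argument in $\mu_{p^s}(\FF_q)$. Everything else is a direct application of \autoref{cent} and \autoref{lemquot}.
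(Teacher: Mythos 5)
Your defining-characteristic argument coincides with the paper's (single Jordan block, \autoref{cent}, abelian centralizer), and the reductions to $\SL$ and $\PSL$ are in the same spirit. The gap is in the cross-characteristic construction. You insist that within each $x_i$ the companion blocks come from \emph{pairwise coprime} polynomials of \autoref{cyclo}, so that $\C_{\GL(V_i)}(x_i)$ is already abelian, and you rely on the split $V_1\oplus V_2$ to absorb whatever does not fit. But the supply of coprime factors of degree $ep^j$ is $(p^s-1)/e$ (for $j=0$) resp. $\phi(p^s)/e$ (for $j\ge 1$), and these can equal $1$: take $p=5$, $q=2$, so $e=4$, $s=1$, and $X^{5^m}-1=(X-1)\gamma_{0,1}\gamma_{1,1}\cdots$ with exactly one factor of each degree $4\cdot 5^j$. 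For $n=16$ you need total companion-block degree $16$ from factors of degree at most $16$, i.e.\ from $X-1$ and the single $\gamma_{0,1}$ of degree $4$; even after splitting into two summands, and even before worrying that the common $1$-eigenspace of $x_1$ and $x_2$ contributes a $\GL(r_1+r_2,q)$ that must have $r_1+r_2<e$, you can cover at most $(e-1)+4+4=11<16$ dimensions. In general the base-$p$ digits of $(n-a_0)/e$ go up to $p-1$, so a two-fold split cannot compensate for a factor supply of size $1$; the ``elementary but case-dependent count'' you defer is genuinely false.

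The paper's way around this is the step your proposal is missing: it \emph{repeats} the same companion matrix, setting $x_i=\diag(M_i,\ldots,M_i)$ with $a_{i+1}$ copies, accepts that $\C(x)$ is then a nonabelian group of block matrices with entries in $\FF_q[M_i]$, and uses the second base element $y_i=\diag(M_i,M_i^2,\ldots,M_i^{a_{i+1}})$ to kill the off-diagonal blocks: $(M_i^{l-k}-1)A_{kl}=0$ forces $A_{kl}=0$ because $\gamma_{i,1}\nmid X^{l-k}-1$ for $0<l-k<p$. In other words, the second element must do real work inside each isotypic piece, not merely separate two summands. Your $e=1$ determinant adjustment for $\SL$ via a subset product in $\mu_{p^s}(\FF_q)$ is plausible but would also need to be rebuilt on top of a corrected construction.
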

\begin{proof}
Let $q$ be a prime power. By \autoref{lemquot}, it suffices to consider $\GL(n,q)$ and $\SL(n,q)$. 
Suppose first that $p\mid q$. Let $x\in G:=\GL(n,q)$ be a Jordan block of size $n\times n$ with eigenvalue $1$. Then $x$ is a $p$-element since $x^{p^n}-1=(x-1)^{p^n}=0$. Moreover, $\C_G(x)$ consists of polynomials in $x$ by \autoref{cent}. In particular, $\C_G(x)$ is abelian and therefore $p$-nilpotent. Hence, we found a $p$-base of size $1$. Since $(q-1,p)=1$, this is also a $p$-base of $\SL(n,q)$. 

Now let $p\nmid q$. We “linearize” the argument from \autoref{symalt}. Let $e$ and $s$ be as in \autoref{cyclo}. Let $0\le a_0\le e-1$ such that $n\equiv a_0\pmod{e}$. Let 
\[\frac{n-a_0}{e}=\sum_{i=0}^{r}a_{i+1}p^i\] 
be the $p$-adic expansion. Let $M_i\in\GL(ep^i,q)$ be the companion matrix of the polynomial $\gamma_{i,1}$ from \autoref{cyclo} for $i=0,\ldots,r$. Let $G_i:=\GL(ea_{i+1}p^i,q)$ and $x_i:=\diag(M_i,\ldots,M_i)\in G_i$. Then the minimal polynomial of 
\[x:=\diag(1_{a_0},x_0,\ldots,x_r)\in G\] 
divides $X^{p^{r+s}}-1$ by \autoref{cyclo}. In particular, $x$ is a $p$-element. Since the $\gamma_{i,1}$ are pairwise coprime, it follows that
\[\C_G(x)=\GL(a_0,q)\times\prod_{i=0}^r\C_{G_i}(x_i).\]
Since $a_0<e$, $\GL(a_0,q)$ is a $p'$-group. 
By \autoref{cent}, every matrix commuting with $M_i$ is a polynomial in $M_i$. Hence, the elements of $\C_{G_i}(x_i)$ have the form $A=(A_{kl})_{1\le k,l\le a_{i+1}}$ where each block $A_{kl}$ is a polynomial in $M_i$. 
We define 
\[y_i:=\diag(M_i,M_i^2,\ldots, M_i^{a_{i+1}})\in\C_{G_i}(x_i)\]
and $y:=\diag(1_{a_0},y_0,\ldots,y_r)\in\C_G(x)$. Let $A=(A_{kl})\in\C_{G_i}(x_i,y_i)$. We want to show that $A_{kl}=0$ for $k\ne l$. To this end, we may assume that $k<l$ and $A_{kl}=\rho(M_i)$ where $\rho\in\FF_q[X]$ with $\deg(\rho)<\deg(\gamma_{i,1})=ep^i$.
Since $A\in\C_{G_i}(x_i,y_i)$, we have $M_i^kA_{kl}=M_i^lA_{kl}$ and $(M^{l-k}-1)A_{kl}=0$. It follows that the minimal polynomial $\gamma_{i,1}$ of $M_i$ divides $(X^{l-k}-1)\rho$. By way of contradiction, we assume that $\rho\ne 0$. Then $\gamma_{i,1}$ divides $X^{l-k}-1$ and $X^{p^{r+s}}-1$. However, $l-k\le a_{i+1}<p$ and $\gamma_{i1}$ must divide $X-1$. This contradicts the definition of $\gamma_{i,1}$ in \autoref{cyclo}. Hence, $A_{kl}=0$ for $k\ne l$. We have shown that the elements of $\C_G(x,y)$ have the form 
\[L\oplus\bigoplus_{i=0}^r\bigoplus_{j=1}^{a_{i+1}}L_{ij}\] 
where $L\in\GL(a_0,q)$ and each $L_{ij}$ is a polynomial in $M_i$. In particular, $\C_G(x,y)$ is a direct product of a $p'$-group and an abelian group. Consequently, $\C_G(x,y)$ is $p$-nilpotent.

Now let $G:=\SL(n,q)$. If $p\nmid q-1$, then the $p$-base of $\GL(n,q)$ constructed above already lies in $G$. Thus, we may assume that $p\mid q-1$. Then $e=1$ and $a_0=0$ with the notation above. We now have the polynomials $\gamma_{i,k}$ with $i=0,\ldots,r$ and $k=1,\ldots,p-1\le\phi(p^s)$ at our disposal. Let $M_{i,k}$ be the companion matrix of $\gamma_{i,k}$. 
Define 
\[x_i:=\diag(M_{i,1},\ldots,M_{i,a_{i+1}})\] 
for $i=0,\ldots,r$. Then the minimal polynomial of $x:=\diag(x_0,\ldots,x_r)\in\GL(n,q)$ has degree $n$ and therefore $\C_{\GL(n,q)}(x)$ is abelian by \autoref{cent}. Let $i\ge 0$ be minimal such that $a_{i+1}>0$. We replace the block $M_{i,1}$ of $x$ by the companion matrix of $X^{p^i}-1$. Then by \autoref{cyclo}, the minimal polynomial of $x$ still has degree $n$. Moreover, $x$ has at least one block $B$ of size $1\times 1$. We may modify $B$ such that $\det(x)=1$. After doing so, it may happen that $B$ occurs twice in $x$. In this case, $\C_G(x)\le \GL(2,q)\times H$ where $H$ is abelian. Then the matrix
\[y:=\begin{cases}
\begin{pmatrix}0&-1\\1&0\end{pmatrix}\oplus 1_{n-2}&\text{if }p=2,\\%
\diag(M_{0,1},M_{0,1}^{-1},1_{n-2})&\text{if }p>2
\end{cases}
\]
lies in $\C_G(x)$ and $\C_G(x,y)$ is abelian. Hence, $\{x,y\}$ is a $p$-base of $G$.
\end{proof}

\autoref{gl} can probably be generalized to classical groups. The next result completes the proof of \autoref{thm2}.

\begin{Prop}
Let $S$ be a sporadic simple group and $G\in\{S,S.2\}$. Then $G$ has a commutative $p$-base of size $2$ for every prime $p$.
\end{Prop}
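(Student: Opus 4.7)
The plan is a case-by-case verification using the $26$ sporadic simple groups together with the explicit data on their conjugacy classes, Sylow subgroups and centralizers available in the ATLAS and the GAP character table library.

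First I would dispose of all primes $p$ for which the Sylow $p$-subgroup $P$ of $G$ is abelian: by \autoref{abel} (with $c=1$) such a $G$ already has a commutative $p$-base of size $2$. Reading off the Sylow structures from the ATLAS eliminates the overwhelming majority of pairs $(S,p)$ and leaves only the primes dividing $|S|$ to a sufficiently high power, for example $p\in\{2,3,5,7,11,13\}$ for the Monster and correspondingly short lists for every other sporadic group.

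For each remaining pair $(S,p)$ my first attempt would be to find a $2$-generated normal subgroup $Q\unlhd P$ with $\C_P(Q)\le Q$ and then apply \autoref{lemsc}; natural candidates are small characteristic subgroups of $P$ such as a suitable term of the upper or lower central series of $P$, since the nonabelian Sylow subgroups of sporadic groups at these small primes are explicitly known. When this succeeds, any pair of generators of $Q$ is a commutative $p$-base. Where the $p$-rank of $P$ is strictly greater than $2$, I would instead pick an explicit $p$-element $x\in P$ whose $G$-centralizer is as small as possible---its structure is tabulated in the ATLAS---and then locate a $p$-element $y\in P\cap\C_G(x)$ commuting with $x$ such that $\C_G(x,y)=\C_{\C_G(x)}(y)$ has a normal $p$-complement. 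The latter property is certified either by inspection of the known structure of $\C_G(x)$ or by applying Frobenius's normal $p$-complement criterion to its character table. For the extensions $G=S.2$, I would choose $x$ and $y$ inside $S$ in classes fixed by the outer automorphism; then $\C_G(\Delta)$ differs from $\C_S(\Delta)$ by at most a factor of order $2$, which is absorbed into the Sylow $p$-part when $p=2$ and is irrelevant to $p$-nilpotency when $p$ is odd.

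The main obstacle is the Baby Monster and the Monster at $p\in\{2,3\}$, where the centralizers of elements of short order are themselves enormous, e.g.\ $\C_M(2A)\cong 2{\cdot}B$. The remedy is to avoid such elements and instead choose $x$ of larger order inside $P$, so that $\C_G(x)$ already has small $p$-rank; the search for a suitable $y$ then reduces to a much smaller group and is essentially forced. Once this has been carried out for each sporadic group and each of the finitely many remaining primes, the proposition follows.
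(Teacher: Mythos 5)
Your overall strategy---reduce to the finitely many pairs $(S,p)$ with a large non-abelian Sylow $p$-subgroup and then hunt, via the Atlas and GAP, for a $p$-element $x$ with small centralizer and a commuting $p$-element $y$ making $\C_G(x,y)$ $p$-nilpotent---is essentially the strategy of the paper, which likewise uses \autoref{lemsc} for the small cases and direct computation where a permutation representation of moderate degree is available. But there are two genuine problems. First, your reduction for $G=S.2$ with $p$ odd is false: if $\C_S(\Delta)$ is $p$-nilpotent and $[\C_G(\Delta):\C_S(\Delta)]=2$, it does \emph{not} follow that $\C_G(\Delta)$ is $p$-nilpotent. A normal $p$-complement of $\C_G(\Delta)$ would have to contain the outer involution, which may act non-trivially on the Sylow $p$-subgroup of $\C_S(\Delta)$; already $C_3\le S_3$ with $p=3$ shows that an index-$2$ overgroup of a $p$-nilpotent group need not be $p$-nilpotent. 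One must verify that the outer involution centralizes the relevant $p$-part modulo $\pcore_{p'}$, or check the case directly, as the paper does for $HN.2$ at $p=3$ (an element of order $9$ with centralizer of order $54$).

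Second, for a statement that amounts to a finite case check, the content of the proof is the explicit list of hard cases and their resolution, and the proposal does not produce it. Concretely, your recipe stalls at the Monster for $p=7$: no $7$-element of $M$ has a centralizer visible in the Atlas that is small enough, and the paper must invoke Wilson's theorem that $M$ contains a radical subgroup $Q\cong C_7\times C_7$ with $\C_M(Q)=Q$---a local subgroup \emph{missing} from the Atlas lists, so ``reading off the Atlas'' cannot succeed there. The other residual cases ($Ly$ at $p=2,3$, $Th$ at $p=2$, $M$ at $p=5$) are settled by choosing $x$ with centralizer $2.A_{11}$, $3.McL$, $2^{1+8}_+.A_9$, $C_5\times HN$ respectively and descending into those groups (using \autoref{symalt} for the alternating sections); none of this is forced by the general outline you give, and until it is carried out the proposition is not proved.
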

\begin{proof}
If $p^4$ does not divide $|G|$, then the claim follows from \autoref{lemsc}.
So we may assume that $p^4$ divides $|G|$. From the character tables in the Atlas~\cite{Atlas} we often find $p$-elements $x\in G$ such that $\C_G(x)$ is already a $p$-group. In this case we found a $p$-base of size $1$ and we are done.
If $G$ admits a permutation representation of “moderate” degree (including $Co_1$), then the claim can be shown directly in GAP~\cite{GAP48}. In the remaining cases we use the Atlas to find $p$-elements with small centralizers:
\begin{itemize}
\item $G=Ly$, $p=2$: There exists an involution $x\in G$ such that $\C_G(x)=2.A_{11}$. By the proof of \autoref{symalt}, there exists $y\in A_{11}$ such that $\C_{A_{11}}(y)$ is a $2$-group. We identify $y$ with a preimage in $\C_G(x)$. Then $\C_G(x,y)$ is a $2$-group.

\item $G=Ly$, $p=3$: Here we find $x\in G$ of order $3$ such that $\C_G(x)=3.McL$. Since $McL$ contains a $3$-element $y$ such that $\C_{McL}(y)$ is a $3$-group, the claim follows.

\item $G=Th$, $p=2$: There exists an involution $x\in G$ such that $\C_G(x)=2^{1+8}_+.A_9$. As before we find $y\in\C_G(x)$ such that $\C_G(x,y)$ is a $2$-group.

\item $G=M$, $p=5$: There exists a $5$-element $x\in G$ such that $\C_G(x)=C_5\times HN$. Since there is also a $5$-element $y\in HN$ such that $\C_{HN}(y)$ is a $5$-group, the claim follows.

\item $G=M$, $p=7$: In this case there exists a radical subgroup $Q\le G$ such that $\C_G(Q)=Q\cong C_7\times C_7$ by Wilson~\cite[Theorem~7]{WilsonLocalM} (this group was missing in the list of local subgroups in the Atlas). Any generating set of $Q$ of size $2$ is a desired $p$-base of $G$. 

\item $G=HN.2$, $p=3$: There exists an element $x\in G$ of order $9$ such that $|\C_G(x)|=54$. Clearly, we find $y\in\C_G(x)$ such that $\C_G(x,y)$ is $3$-nilpotent. \qedhere
\end{itemize}
\end{proof}

Finally, we consider a special case of \autoref{conbase}.

\begin{Prop}
Let $\mathcal{F}$ be a saturated fusion system on a $p$-group $P$ of order at most $p^4$. Then $\mathcal{F}$ has a base of size $2$.
\end{Prop}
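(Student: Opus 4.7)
The plan is to split the analysis by the minimal number of generators $d(P)$.

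If $d(P)\le 2$, take $\Delta$ to be a set of $d(P)$ generators of $P$, so that $\langle\Delta\rangle=P$ is trivially fully $\mathcal{F}$-centralized. By the definition of the centralizer fusion system, every morphism in $\C_\mathcal{F}(P)$ extends to an $\mathcal{F}$-endomorphism of $P$ restricting to $\id_P$; but any such extension is $\id_P$ itself, so $\C_\mathcal{F}(P)=\mathcal{F}_{Z(P)}(Z(P))$ is trivial and $\Delta$ is a base. If instead $P$ is abelian, then $\Aut_P(P)=\Inn(P)=1$, so the saturation axioms force $\Aut_\mathcal{F}(P)$ to be a $p'$-group acting faithfully on $P$; Halasi--Podoski~\cite{base2} then provides $\Delta=\{x,y\}\subseteq P$ whose pointwise stabilizer in $\Aut_\mathcal{F}(P)$ is trivial, and the same extension argument yields triviality of $\C_\mathcal{F}(\langle\Delta\rangle)$.

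The remaining case is $P$ non-abelian with $d(P)\ge 3$. Together with $|P|\le p^4$ this forces $|P|=p^4$, $d(P)=3$, $|\Phi(P)|=p$ and hence $[P,P]=\Phi(P)\cong C_p$, so $P$ has class $2$. A standard symplectic argument for the commutator form on $P/Z(P)$ then gives $|Z(P)|=p^2$ and $P/Z(P)\cong C_p\times C_p$, so $P$ is a central product of an extraspecial group $H_0$ of order $p^3$ with either $C_p$ or $C_{p^2}$, leaving only a handful of isomorphism types. Any $\mathcal{F}$-centric $Q<P$ must contain $Z(P)$, and a short computation forces $Q=Z(P)\langle h\rangle$ for some $h\in P\setminus Z(P)$; such a $Q$ is $2$-generated precisely when $h$ can be chosen of order $p^2$, which happens in every isomorphism type of $P$ except $P\cong p^{1+2}_+\times C_p$ for $p$ odd. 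In all those favourable cases the $2$-generated centric $Q$ yields a base via the trivial-extension argument from the $d(P)\le 2$ case, and a direct check in each type shows that $Q$ is automatically fully $\mathcal{F}$-centralized.

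The single remaining case is $P\cong p^{1+2}_+\times C_p$ with $p$ odd, where every proper $\mathcal{F}$-centric subgroup is elementary abelian of rank $3$, so no $2$-generated subgroup of $P$ is self-centralizing. Here $P$ itself is not $\mathcal{F}$-essential (as $\Aut_P(P)=\Inn(P)$ is already a Sylow $p$-subgroup of $\Aut_\mathcal{F}(P)$), so by Alperin's fusion theorem $\mathcal{F}$ is controlled by the automizers $\Aut_\mathcal{F}(A)\le\GL_3(p)$ of the few $\mathcal{F}$-conjugacy classes of essential subgroups $A\cong C_p^3$, each having $\Aut_P(A)\cong C_p$ as Sylow $p$-subgroup. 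My plan is to choose $\Delta=\{x,y\}\subseteq A$ with $\langle\Delta\rangle$ of rank $2$ and containing a non-central element, where $x,y$ are produced by Halasi--Podoski applied to a $p'$-complement of $\Aut_\mathcal{F}(A)$ so that the pointwise stabilizer of $\Delta$ inside that complement is trivial; together with the $\mathcal{F}$-invariance of $[P,P]$, this should force every morphism of $\C_\mathcal{F}(\langle\Delta\rangle)$ to be inner. The main obstacle will be the possible presence of several $\mathcal{F}$-conjugacy classes of essential subgroups: one must pick a single $\Delta$ that simultaneously works for every relevant automizer, which is the technical heart of the proof and may require a case-by-case check of the (finitely many) saturated fusion systems on $p^{1+2}_+\times C_p$.
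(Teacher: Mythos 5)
Your reduction is sound up to the last case, and it essentially reorganizes what the paper does: the cases $d(P)\le 2$, $P$ abelian, and the existence of a $2$-generated self-centralizing maximal subgroup $Q\ge\Z(P)$ are all disposed of by the same extension observation (a morphism of $\C_{\mathcal F}(\langle\Delta\rangle)$ defined on a subgroup of $\C_P(\langle\Delta\rangle)\le Q$ extends to an automorphism of $Q$ restricting to the identity on $\langle\Delta\rangle$, hence is the identity when $\langle\Delta\rangle=Q$). Two small remarks: you do not need a ``direct check in each type'' that $Q$ is fully centralized, since $Q$ is maximal in $P$ and hence fully normalized; and it is cleaner to index the case distinction by whether some maximal subgroup containing $\Z(P)$ is $2$-generated, rather than by isomorphism types of $P$.

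The genuine gap is the exceptional case (all such maximal subgroups elementary abelian of rank $3$, i.e.\ $P\cong p^{1+2}_+\times C_p$ for $p$ odd), which you explicitly leave as a plan, and the plan as stated would not go through. Halasi--Podoski is a theorem about coprime actions, whereas $\Aut_{\mathcal F}(A)$ for an essential $A\cong C_p^3$ has $\Aut_P(A)\cong C_p$ as a non-normal Sylow $p$-subgroup: a $p'$-complement need not exist (the automizer is only constrained to have a strongly $p$-embedded subgroup), and even when it does, triviality of the stabilizer of $\Delta$ in the complement does not give triviality in all of $\Aut_{\mathcal F}(A)$ --- a conjugate of $\Aut_P(A)$ can still fix $\Delta$ pointwise if $\langle\Delta\rangle$ is fused into $\Z(P)$, and nothing in Halasi--Podoski guarantees that $\Delta$ contains a non-central element. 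By contrast, your worry about several essential classes is a red herring. The paper's resolution rests on one idea your sketch is missing: since $[P,Q]=P'$ has order $p$, Oliver's Lemma~1.11 in \cite{Oliverindexp} provides an $\Aut_{\mathcal F}(Q)$-invariant direct decomposition $Q=\langle x,y\rangle\times\langle z\rangle$ of a \emph{single} essential subgroup $Q$; taking $\Delta=\{xz,y\}\nsubseteq\Z(P)$ forces $\C_{\Aut_{\mathcal F}(Q)}(\Delta)=1$ (an automorphism fixing $xz$ fixes $x$ and $z$ separately because it preserves both factors) as well as $\C_P(\Delta)=Q$, so every morphism of $\C_{\mathcal F}(\Delta)$ is defined on a subgroup of $Q$ and extends to an element of $\C_{\Aut_{\mathcal F}(Q)}(\Delta)=1$. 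No control of the remaining automizers, and no enumeration of saturated fusion systems on $p^{1+2}_+\times C_p$, is required.
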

\begin{proof}
Recall that $A:=\Out_{\mathcal{F}}(P)$ is a $p'$-group and there is a well-defined action of $A$ on $P$ by the Schur--Zassenhaus Theorem. If $\mathcal{F}$ is the fusion system of the group $P\rtimes A$, then the claim follows from Halasi--Podoski~\cite{base2} as before. We may therefore assume that $P$ contains an $\mathcal{F}$-essential subgroup. In particular, $P$ is non-abelian. Let $Q<P$ be a maximal subgroup of $P$ containing $\Z(P)$. The fusion system $\C_{\mathcal{F}}(Q)$ on $\C_P(Q)=\Z(Q)$ is trivial by definition. Hence, we are done whenever $Q$ is generated by two elements. 

It remains to deal with the case where $|P|=p^4$ and all maximal subgroups containing $\Z(P)$ are elementary abelian of rank $3$. Since two such maximal subgroups intersect in $\Z(P)$, we obtain that $|\Z(P)|=p^2$ and $|P'|=p$ by \cite[Lemma~1.9]{Oliverindexp}, for instance. By the first part of the proof, we may choose an $\mathcal{F}$-essential subgroup $Q$ such that $\Z(P)<Q<P$. Let $A:=\Aut_{\mathcal{F}}(Q)$. 
Since $Q$ is essential, $P/Q$ is a non-normal Sylow $p$-subgroup of $A$ (see \cite[Proposition~I.2.5]{AKO}). Moreover, $[P,Q]=P'$ has order $p$. By \cite[Lemma~1.11]{Oliverindexp}, there exists an $A$-invariant decomposition 
\[Q=\langle x,y\rangle\times\langle z\rangle.\] 
We may choose those elements such that $\Delta:=\{xz,y\}\nsubseteq\Z(P)$. Then $\C_P(\Delta)=Q$ and $\C_A(\Delta)=1$. Let $\phi:S\to T$ be a morphism in $\mathcal{C}:=\C_{\mathcal{F}}(\Delta)$ where $S,T\le Q$. Then $\phi$ extends to a morphism $\hat\phi:S\langle\Delta\rangle\to T\langle\Delta\rangle$ in $\mathcal{F}$ such that $\hat\phi(x)=x$ for all $x\in\langle\Delta\rangle$. Hence, if $S\le\langle\Delta\rangle$, then $\phi=\id$. Otherwise, $S\langle\Delta\rangle=Q$ and $\hat\phi\in\C_A(\Delta)=1$ since morphisms are always injective. In any case, $\mathcal{C}$ is the trivial fusion system and $\Delta$ is a base of $\mathcal{F}$.
\end{proof}

\section*{Acknowledgment}
The author is supported by the German Research Foundation (\mbox{SA 2864/1-2} and \mbox{SA 2864/3-1}).

\end{document}